\titleformat{\section}{\large\bfseries}{\thesection.}{1em}{}
\newtheorem{theorem}{Theorem}[section]
\newtheorem{lemma}[theorem]{Lemma}
\newtheorem{proposition}[theorem]{Proposition}
\newtheorem{corollary}[theorem]{Corollary}
\newtheorem{conjecture}[theorem]{Conjecture}
\title{Acyclic Subgraphs of Planar Digraphs}
\author{Noah Golowich\thanks{Research Science Institute, Massachusetts Institute of Technology, Cambridge MA.}~ and David Rolnick\thanks{Dept.~of Mathematics, Massachusetts Institute of Technology, Cambridge MA. Email: \href{mailto:drolnick@mit.edu}{drolnick@mit.edu}}}
\date{\today}
\begin{document}
\maketitle
\begin{abstract}
An acyclic set in a digraph is a set of vertices that induces an acyclic subgraph.  In 2011, Harutyunyan conjectured that every planar digraph on $n$ vertices without directed 2-cycles possesses an acyclic set of size at least $3n/5$. We prove this conjecture for digraphs where every directed cycle has length at least 8. More generally, if $g$ is the length of the shortest directed cycle, we show that there exists an acyclic set of size at least $(1 - 3/g)n$.
\end{abstract}
\section{Introduction}
A proper {\it vertex coloring} of an undirected graph $G$ partitions the vertices into independent sets. It is natural to try to reformulate this notion for directed graphs (digraphs).  An {\it acyclic set} in a digraph is a set of vertices whose induced subgraph contains no directed cycle. The {\it acyclic chromatic number} of a digraph $D$, denoted $\chi_A(D)$, is the minimal number of acyclic sets into which the vertices of $D$ may be partitioned. In this paper, we consider \emph{oriented graphs}, which are digraphs such that at most one edge connects any pair of vertices.

Although recent results \cite{acyclic_aharoni_2008, circular_bokal_2004, two_harut_2012, digraph_keevash_2013} suggest that the acyclic chromatic number behaves similarly to the undirected chromatic number, much still remains to be learned. 
Bokal et al.~\cite{circular_bokal_2004} credit \v Skrekovski with the conjecture that all oriented planar graphs have acyclic chromatic number at most 2.
\begin{conjecture}[\cite{circular_bokal_2004}]
\label{conj:skrekovski}
Every oriented planar graph is acyclically 2-colorable.
\end{conjecture}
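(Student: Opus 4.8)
The plan is to attempt the conjecture by induction on $|V(D)|$, working with a minimal counterexample $D$ and extracting reducible configurations from the planar structure. Throughout, \emph{acyclically $2$-colorable} means the vertex set admits a partition $V = V_1 \cup V_2$ in which each color class $V_i$ induces a subdigraph with no directed cycle. The governing difficulty, which shapes the entire approach, is that directed cycles are \emph{global} objects: unlike an independent set in a proper coloring, whether a color class is acyclic cannot be certified by inspecting bounded neighborhoods, since a single directed cycle may traverse a constant fraction of the vertices. Consequently the usual ``delete a low-degree vertex and re-insert it'' reduction must be supplemented by control over long directed paths.

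First I would dispose of the genuinely local cases. Since $D$ is oriented and planar, Euler's formula guarantees a vertex $v$ of total degree at most $5$; I first try to exploit a vertex of degree at most $2$ when one exists. If $v$ is a source or a sink, it lies on no directed cycle, so any acyclic $2$-coloring of $D - v$ extends by coloring $v$ arbitrarily. If $v$ has a single in-neighbor $u$ and a single out-neighbor $w$, then every directed cycle through $v$ uses the path $u \to v \to w$; taking an acyclic $2$-coloring of $D - v$ by induction and coloring $v$ with a color differing from at least one of $u, w$ ensures that this length-two subpath is not monochromatic, so $v$ lies on no monochromatic directed cycle and each class stays acyclic. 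Thus any minimal counterexample has minimum degree at least $3$ and contains no sources or sinks.

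The crux is therefore the reinsertion of a vertex $v$ of degree $3$, $4$, or $5$, and more generally the reduction of a small planar configuration. The key observation is that reinserting $v$ with color $c$ creates a monochromatic directed cycle precisely when, in the induced coloring of $D - v$, there is a directed path of color $c$ from some out-neighbor of $v$ back to some in-neighbor of $v$. To make $v$ reducible I would aim to show that the two candidate colors cannot both be blocked: if color $1$ is blocked by an out-to-in path $P_1$ and color $2$ by a path $P_2$, then $P_1$, $P_2$, and $v$ should be forced into a planar configuration rich enough to permit a local recoloring. This is exactly where I expect the argument to stall, and it is the genuine obstacle: bounding or rerouting these global directed paths requires a \emph{linkage} lemma for planar digraphs controlling directed connectivity between the neighbors of a small cut, and no such local certificate is available in general. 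I would try to manufacture one via a discharging scheme assigning each vertex $x$ the charge $\deg(x) - 6$ (so the total charge is at most $-12$ by Euler's formula) and redistributing it to expose an unavoidable set of configurations on which the two blocking paths are forced to intersect or share structure, enabling a simultaneous recoloring of the configuration together with the offending path segments.

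As a partial result accessible from the machinery already developed, I would record the high-girth corollary. If the shortest directed cycle of $D$ has length $g$, the main theorem produces an acyclic set $A$ with $|V \setminus A| \le 3n/g$, and whenever $3n/g < g$, that is $g > \sqrt{3n}$, the complement $V \setminus A$ is too small to contain a directed cycle, so $\{A,\, V \setminus A\}$ is itself an acyclic $2$-coloring. This confirms the conjecture in the sparse regime but leaves the dense, bounded-girth case—precisely where the global directed paths are hardest to control—as the essential obstruction to the full statement.
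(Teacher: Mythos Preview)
The statement you are addressing is a \emph{conjecture} (attributed to \v{S}krekovski), and the paper does not prove it; it is presented in the introduction as an open problem that motivates the weaker results that follow. There is therefore no ``paper's own proof'' to compare against, and your proposal should be read as an attack on an open problem rather than a reconstruction of a known argument.

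Your outline is candid about where it breaks down, and that break is genuine. The reductions for vertices of degree at most $2$ are correct, but the core case---reinserting a vertex $v$ of degree $3$, $4$, or $5$---requires ruling out that both colors are simultaneously blocked by monochromatic directed paths in $D - v$ from an out-neighbor of $v$ back to an in-neighbor. No finite discharging scheme is known to produce an unavoidable set of configurations on which such blocking paths can be controlled, precisely because those paths are global and need not interact with any bounded neighborhood of $v$. This is the essential obstruction that keeps the conjecture open; the Harutyunyan--Mohar proof for digirth at least $5$ already uses an intricate discharging argument with dozens of configurations, and the general (digirth-$3$) case has not yielded to that machinery. Your partial corollary that the conjecture holds when $g > \sqrt{3n}$ is correct but is dominated by the known result, cited in the paper, that digirth $g \ge 5$ already suffices for an acyclic $2$-coloring, independent of $n$.
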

Bokal et al.~\cite{circular_bokal_2004} showed that all oriented planar graphs are acyclically 3-colorable. One approach to Conjecture \ref{conj:skrekovski} has been to look for lower bounds on the size of the largest acyclic set of a planar oriented graph. Borodin \cite{borodin_acyclic_1979} showed that there must exist an acyclic set of size at least $2n/5$, where $n$ is the number of vertices. Harutyunyan and Mohar \cite{harut_planar_2014} ask whether, in any planar oriented graph, there exists an acyclic set of size at least $n/2$. Note that this would follow immediately from Conjecture \ref{conj:skrekovski}.
Harutyunyan \cite{harut_brooks_2011} recently conjectured an even stronger bound on the maximum size of an acyclic set:
\begin{conjecture}[\cite{harut_brooks_2011}]
\label{conj:harut}
Every oriented planar graph on $n$ vertices contains an acyclic set of size at least $3n/5$.
\end{conjecture}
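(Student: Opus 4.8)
The plan is to isolate the genuinely hard regime and attack it with discharging, using the $(1-3/g)n$ bound established in this paper to dispose of the rest. If $g$ denotes the directed girth of the input oriented planar graph $D$, then $(1-3/g)n \ge (1-3/8)n = 5n/8 > 3n/5$ whenever $g \ge 8$, so the conjecture is already settled in that range. The entire difficulty therefore lies with graphs that contain a directed cycle of length at most $7$, where $(1-3/g)n$ drops below $3n/5$. So I would fix the target statement ``$D$ has an acyclic set of size $\ge 3n/5$,'' equivalently ``$D$ has a feedback vertex set of size $\le 2n/5$,'' and concentrate all effort on the presence of short directed cycles.

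First I would pass to a vertex-minimal counterexample $D$, fix a plane embedding, and record the standard reductions. Any vertex of in-degree $0$ or out-degree $0$ lies on no directed cycle, so by minimality it may be appended to an acyclic set of $D$ with that vertex deleted; this forces $\delta^-(D), \delta^+(D) \ge 1$. A degree-$2$ vertex then has the form $a \to v \to b$, and suppressing it (replacing the path by an arc $a \to b$, with care taken not to create a digon) preserves the directed-cycle structure and lets induction handle the smaller graph, so $\delta(D) \ge 3$. Pushing these ideas further — deleting or contracting small configurations whose removal changes the required acyclic-set size by strictly less than the $3/5$ rate — should yield a list of forbidden (\emph{reducible}) local configurations, chiefly short faces bounded by a single cyclic orientation and small clusters of edge-sharing directed triangles.

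Second, I would run a discharging argument on the embedding. Assign each vertex $v$ the charge $\deg(v) - 6$ and each face $f$ the charge $2\ell(f) - 6$, so that Euler's formula gives total charge $6(m - n - \phi) = -12$. The rules should move charge from high-degree vertices and long faces toward the short directed faces that carry the cyclic structure, and must be calibrated so that the reducibility list forces every final charge to be nonnegative, contradicting the negative total. The subtle point is numerological: the weight attached to each vertex must track its marginal contribution toward the $3/5$ threshold so that the rules match the extremal constant. I would therefore first identify the extremal construction(s) attaining $3n/5$ — a planar oriented graph densely packed with interlocking short directed cycles is the natural candidate — and use it to reverse-engineer the charge allocation.

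I expect the main obstacle to be exactly the configurations invisible to the $(1-3/g)n$ bound: regions tiled by interlocking directed triangles and $4$- through $7$-cycles in which every small vertex set leaves some short cycle intact. Coping with these requires reducible configurations that are genuinely two-dimensional (several adjacent short faces treated together) rather than single-vertex deletions, and the case analysis on the cyclic orientation patterns around such clusters is where I anticipate both the real work and the real risk that the tight constant $3/5$ forces configurations too large to reduce by local induction. As a fallback I would try a merge-and-repair route: start from the acyclic $3$-colouring of Bokal et al.~\cite{circular_bokal_2004}, take the union of the two largest colour classes (size $\ge 2n/3$), and bound by a separate planarity argument the number of vertices that must be deleted to break the directed cycles created across the merge, aiming to stay within the $2n/3 - 3n/5 = n/15$ slack.
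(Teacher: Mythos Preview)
The statement you are attempting is Conjecture~\ref{conj:harut}, which the paper does \emph{not} prove; it is presented as open. The paper's contribution is Theorem~\ref{thm:main}, which yields the conjecture only when $g\ge 8$, and you correctly invoke that result to dispose of that range. For $g\le 7$ there is simply no proof in the paper to compare your proposal against; indeed, Section~\ref{sec:counter} is devoted to explaining why the paper's feedback-arc-set method does not extend to small digirth.

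What you offer for $g\le 7$ is a research programme, not a proof. The discharging argument is announced but not executed: you specify no reducible configurations beyond the trivial $\delta\ge 3$ reductions, no discharging rules, and no verification that final charges are nonnegative. You yourself flag this as ``where I anticipate both the real work and the real risk,'' and nothing in the proposal excludes the possibility that the tight constant $3/5$ forces configurations too large for local induction. The fallback route is equally undeveloped: merging two colour classes of the Bokal et al.\ acyclic $3$-colouring gives a set of size at least $2n/3$, but you provide no argument---and there is no evident one---that breaking the directed cycles created by the merge costs at most $n/15$ vertices. In short, the proposal correctly isolates the hard regime and names a plausible line of attack, but it carries out no step that would actually settle the conjecture there.
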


Finding the largest size of an acyclic set in a digraph is equivalent to finding a set of vertices of minimum size which has a non-empty intersection with each directed cycle. In \cite{generalized_jain_2005}, Jain et al. investigated the applications of this problem in deadlock resolution.

 The {\it digirth} of a directed graph is the length of its shortest directed cycle. Recently, Harutyunyan and Mohar \cite{harut_planar_2014} proved that every oriented planar graph of digirth 5 is acyclically 2-colorable. They used an intricate vertex-discharging method to show that any minimal counterexample must contain at least one of 25 specific configurations of vertices and edges. They then demonstrated that if a digraph contains one of these configurations, the problem of finding an acyclic 2-coloring of the digraph reduces to finding an acyclic 2-coloring of a digraph with one fewer vertex, thus showing that no minimal counterexample exists. The authors posed the problem of finding a simpler approach for considering acyclic colorings in oriented planar graphs.
 
In this paper, we introduce such an approach in Theorem \ref{thm:main}, which improves known lower bounds on the largest acyclic set in oriented planar graphs of digirth at least 4. Specifically, we give a short proof of the fact that every planar digraph on $n$ vertices and of digirth $g$ possesses an acyclic set of size at least $(1-3/g)n$. We prove this by using a corollary of the Lucchesi-Younger theorem \cite{lucchesi_minimax_1978} to find an upper bound on the size of a minimum feedback arc set of an oriented planar graph. We also give a slightly stronger bound for the cases $g = 4$ and $g=5$. Our results prove Conjecture \ref{conj:harut} when $g \geq 8$.

In Section \ref{sec:proof}, we prove our main result, Theorem \ref{thm:main}. In Section \ref{sec:counter}, we describe some potential extensions of Theorem \ref{thm:main} and difficulties that arise.
\section{Bounds on the largest acyclic set}
\label{sec:proof}
In this section, we use a corollary of the Lucchesi-Younger theorem to prove Theorem \ref{thm:main}. We begin with some definitions. Given a directed graph $D$ and a subset $X$ of its vertices $V(D)$, we define $\bar{X} = V(D) \backslash X$. If every edge between $X$ and $\bar{X}$ is directed from $X$ to $\bar{X}$, then the set of such edges is called a {\it directed cut}. A {\it dijoin} is a set of edges that has a non-empty intersection with every directed cut.

The Lucchesi-Younger theorem \cite{lucchesi_minimax_1978} gives the minimum size of a dijoin of a digraph:
\begin{theorem}[Lucchesi-Younger \cite{lucchesi_minimax_1978}]
The minimum cardinality of a dijoin in a directed graph $D$ is equal to the maximum number of pairwise disjoint directed cuts of $D$.
\end{theorem}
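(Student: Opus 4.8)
The plan is to prove the two inequalities separately. One of them is immediate: if $C_1,\dots,C_k$ are pairwise disjoint directed cuts, then any dijoin meets every $C_i$ and hence contains $k$ distinct edges, so the minimum dijoin has size at least $k$. The reverse inequality is the real content, and I would obtain it via linear programming duality together with total dual integrality. Consider the dicut-covering program $\min\bigl\{\sum_e x_e : x\ge 0,\ \sum_{e\in\delta^+(U)} x_e\ge 1 \text{ whenever } \emptyset\ne U\subsetneq V(D) \text{ and } \delta^-(U)=\emptyset\bigr\}$, whose LP dual is $\max\bigl\{\sum_U y_U : y\ge 0,\ \sum_{U:\,e\in\delta^+(U)} y_U\le 1 \text{ for every edge } e\bigr\}$, with one dual variable $y_U$ for each vertex set $U$ that defines a directed cut. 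By LP duality they share a common optimum $z^{*}$. Any integral optimal primal solution is automatically $0/1$ (an entry $\ge 2$ could be lowered to $1$ while keeping feasibility, contradicting optimality) and is then the indicator vector of a dijoin of size $z^{*}$; any integral optimal dual solution must satisfy $y_U\in\{0,1\}$ with the cuts $\delta^+(U)$ having $y_U=1$ pairwise edge-disjoint, hence is a packing of $z^{*}$ directed cuts. Combined with the trivial direction, this squeezes both the minimum dijoin size and the maximum number of disjoint directed cuts to $z^{*}$. So it suffices to prove that the dicut-covering system is totally dual integral; primal integrality then follows automatically from the Edmonds--Giles theorem that a totally dual integral system with integral right-hand side defines an integral polyhedron.

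For total dual integrality I would run the classical uncrossing argument. Two structural facts drive it: (a) if $\delta^+(U)$ and $\delta^+(W)$ are directed cuts, then so are $\delta^+(U\cap W)$ and $\delta^+(U\cup W)$ whenever these sets are nonempty and proper, because the condition ``no edge enters the set'' is preserved under intersection and union; and (b) the incidence vectors satisfy $\chi_{\delta^+(U)}+\chi_{\delta^+(W)}\ge\chi_{\delta^+(U\cap W)}+\chi_{\delta^+(U\cup W)}$, which one checks edge by edge. Fix an integral objective $w\ge 0$ (only $w\equiv 1$ is needed, but the general case costs nothing) and, among all optimal dual solutions, choose one, $y$, minimizing the potential $\sum_U y_U\,|U|^2$. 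If two sets $U,W$ in the support of $y$ cross, shift weight $\varepsilon=\min(y_U,y_W)$ off $U$ and $W$ and onto $U\cap W$ and $U\cup W$: by (a) these are still directed cuts, by (b) the solution stays feasible, the objective is unchanged so optimality is kept, yet the potential strictly decreases because $|U\cap W|^2+|U\cup W|^2<|U|^2+|W|^2$ for crossing sets. Hence $y$ may be taken to have a cross-free support family $\mathcal F$.

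A cross-free family on a finite ground set admits a tree representation, under which the incidence matrix between edges of $D$ and the cuts $\{\delta^+(U):U\in\mathcal F\}$ is, up to negating columns, a network matrix, hence totally unimodular. Restricting the dual program to the variables indexed by $\mathcal F$ therefore gives a program with totally unimodular constraint matrix and integral right-hand side, which has an integral optimal solution; since the chosen $y$ already attains the global optimum using only $\mathcal F$, that integral vector is an integral optimal solution of the full dual. That is total dual integrality; specializing to $w\equiv 1$, the reductions of the first paragraph then yield the theorem.

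I expect the uncrossing to be the main obstacle, for three intertwined reasons. First, the edge-by-edge verification of (b) and of feasibility of the weight shift, while routine, splits into several cases according to where each edge lies relative to $U$, $W$, $U\cap W$, $U\cup W$. Second, one must dispose of degeneracies — an uncrossed set might carry no outgoing edge and so fail to be a directed cut — which turns out to contradict optimality but still has to be addressed. Third, and most delicate, is termination: the potential strictly decreases at each step, but to conclude the process halts one should first arrange that the optimal dual solution has bounded denominators (or argue through the finitely many possible cross-free families). A milder obstacle is making ``cross-free family $\Rightarrow$ totally unimodular incidence matrix'' self-contained, which really wants the tree-representation lemma for cross-free set systems together with the theory of network matrices; in a write-up I would cite these. (A purely combinatorial alternative in the spirit of Lov\'asz — induction on $|A(D)|$ while contracting a carefully chosen edge of a minimum dijoin — also exists, but its extremal bookkeeping is at least as heavy.)
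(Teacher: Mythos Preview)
The paper does not prove the Lucchesi--Younger theorem at all; it quotes it from \cite{lucchesi_minimax_1978} and then applies it (via planar duality) to obtain Corollary~\ref{cor:lucchesi}. So there is no ``paper's own proof'' to compare against, and any assessment must be of your argument on its own merits.

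Your overall strategy --- LP duality plus total dual integrality of the dicut-covering system, established by an uncrossing argument that reduces to a cross-free (laminar) support and then invokes total unimodularity of network matrices --- is a standard and legitimate route, essentially the Edmonds--Giles framework specialized to this problem. The easy inequality, the LP formulation, the closure of ``no entering edge'' under intersection and union, and the submodular edge-by-edge inequality (b) are all fine, and you correctly flag the degeneracy and termination issues.

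There is, however, a concrete slip in the heart of the uncrossing step. You claim the potential $\sum_U y_U\,|U|^2$ \emph{decreases} under uncrossing because $|U\cap W|^2+|U\cup W|^2<|U|^2+|W|^2$. The inequality goes the other way: writing $a=|U\cap W|$, $b=|U\setminus W|$, $c=|W\setminus U|$, one has
\[
|U\cap W|^2+|U\cup W|^2-\bigl(|U|^2+|W|^2\bigr)=a^2+(a+b+c)^2-(a+b)^2-(a+c)^2=2bc>0
\]
whenever $U$ and $W$ cross. So your potential \emph{increases} under uncrossing, and as written the argument does not terminate. The fix is easy and does not disturb the rest of the plan: either (i) choose an optimal dual solution \emph{maximizing} $\sum_U y_U\,|U|^2$ and derive a contradiction from any crossing pair in its support, or (ii) keep the minimization but replace the potential by $\sum_U y_U\,|U|\,(n-|U|)$, which genuinely drops by $2\varepsilon bc$ at each uncrossing. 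With either correction the cross-free support is obtained, and your TDI conclusion then goes through.
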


In the case that $D$ is planar, the Lucchesi-Younger theorem has a useful corollary for the dual of $D$. Given an oriented planar graph $D$, the {\it dual} of $D$, denoted $D^{\star}$, is defined as follows. For a given planar embedding of $D$, construct a vertex of $D^\star$ within each face of $D$. For each edge $uv$ of $D$ separating faces $f$ and $g$ of $D$, a corresponding edge $f^\star g^\star\in E(D^\star)$  is drawn between vertices $f^\star$ and $g^\star$. The direction of edge $f^\star g^\star$ is defined so that as it crosses $uv$, $v$ is on the left. It is simple to verify that the graph $D^\star$ does not depend on the planar embedding of $D$.

The following well-known result establishes a bijection between the directed cycles of a planar oriented graph and the directed cuts in its dual:

\begin{proposition}
\label{prop:bij}
If $D$ is a planar oriented graph, then the directed cycles of $D$ are in one-to-one correspondence with the directed cuts in $D^\star$.
\end{proposition}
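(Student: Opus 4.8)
The plan is to read everything off from the topology of the planar embedding. A directed cycle $C$ of $D$ is a simple closed curve which (working on the sphere $S^2$, so that the unbounded face plays no special role) divides the faces of $D$ into those lying on one side of $\vec{C}$ and those on the other. Let $Y\subseteq V(D^\star)$ collect the dual vertices of the faces on one side and $\bar Y$ those on the other. I claim $\delta(Y)=\{\,e^\star : e\in E(C)\,\}$ and that this is a directed cut. For the set equality: each edge of $D^\star$ is $e^\star$ for a unique $e\in E(D)$, and $e^\star$ joins $Y$ to $\bar Y$ exactly when the two faces bordering $e$ lie on opposite sides of the curve $C$, which (since the interior of an edge not on $C$ avoids $C$, while an edge on $C$ is crossed transversally from one side to the other) happens precisely when $e\in E(C)$. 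For the orientation: by the defining convention $e^\star$ crosses $e$ with a fixed handedness, so as $e$ runs over $E(C)$ --- traversed coherently by the cycle --- the edges $e^\star$ all point across $C$ the same way; labelling the two sides so that this direction is from $Y$ to $\bar Y$, we conclude $\delta(Y)$ is a directed cut, and $C\mapsto\delta(Y)$ is well defined.

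This map is injective because $e\mapsto e^\star$ is a bijection $E(D)\to E(D^\star)$ and a directed cycle is determined by its set of directed edges. For surjectivity, given a directed cut $\delta(Y)$ of $D^\star$, set $S=\{\,e\in E(D): e^\star\in\delta(Y)\,\}$; I must show $S$ is the edge set of a directed cycle. The key input is a local computation at each vertex $w$ of $D$: listing the faces around $w$ cyclically, an edge of $D$ at $w$ lies in $S$ precisely when it separates two faces on opposite sides of the cut, and the orientation convention together with the hypothesis that $\delta(Y)$ is a \emph{directed} cut forces these ``sign changes'', read around $w$, to alternate, so that $w$ has as many edges of $S$ entering as leaving. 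Hence every vertex has equal in- and out-degree in $S$, so $S$ decomposes into edge-disjoint directed cycles; and since $S$ is the topological boundary of the region $\bigcup_{f^\star\in Y}\overline f$ on $S^2$, one identifies it with a single coherently oriented cycle $C$, whose forward image is $\delta(Y)$ by the first paragraph.

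The step I expect to be the real obstacle is this last one: showing that $S$ is a single cycle rather than a disjoint union of several coherently oriented cycles (equivalently, that the region $\bigcup_{f^\star\in Y}\overline f$ is connected and unpinched). This can genuinely fail when $D$ has bridges or cut vertices --- e.g. two vertex-disjoint directed triangles joined by an edge yield a directed cut of $D^\star$ whose preimage is the union of both triangles --- so it must be handled either by a connectivity hypothesis on $D$ or by restricting the correspondence to inclusion-minimal directed cuts of $D^\star$, which is the version actually needed for the Lucchesi--Younger argument (a dijoin meets every directed cut iff it meets every minimal one); I would make the relevant convention explicit at this point. Everything else is routine planar-duality bookkeeping, and I would also remark that the argument is symmetric under $D\leftrightarrow D^\star$ since $(D^\star)^\star=D$.
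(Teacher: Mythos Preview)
Your forward map is exactly the paper's: a directed cycle $C$ separates the faces into two sides, and the orientation convention forces every dual edge crossing $C$ to point the same way, so the faces on one side give a set $Y$ with $\delta(Y)$ a directed cut. The paper's entire treatment of the converse is the single sentence ``we reverse the method above to obtain a directed cycle of $D$,'' so your surjectivity argument is substantially more careful than what appears there.

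The obstacle you isolate is genuine, and the paper simply glosses over it. A non-minimal directed cut of $D^\star$ can indeed pull back to a disjoint union of coherently oriented directed cycles rather than a single cycle (your two-triangles example works, and one can also take two directed triangles sharing a single vertex to avoid bridges entirely), so Proposition~\ref{prop:bij} as literally stated is not a bijection. Your proposed repair --- restrict to inclusion-minimal directed cuts of $D^\star$, noting that a dijoin meets every cut iff it meets every minimal one --- is the standard fix and is exactly what Corollary~\ref{cor:lucchesi} actually needs. So your proof is correct once that convention is imposed, and in fact more honest about the issue than the paper's.
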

\begin{proof}
Pick a planar embedding of $D$ and embed $D^\star$ in the plane as defined above. Given a directed cycle $C$ in $D$, notice that all edges of $D^\star$ crossing an edge of $C$ must travel in the same direction: specifically, if $C$ is oriented clockwise, then all edges of $D^\star$ crossing $C$ point inwards, and if $C$ is oriented counterclockwise, then the edges of $D^\star$ crossing $C$ point outwards. Let $X \subset V(D^\star)$ consist of the vertices of $D^\star$ corresponding to all faces of $D$ inside $C$, and therefore the edges of $D^\star$ connecting $X$ and $V(D^\star) \backslash X$ form a directed cut.

Conversely, given a directed cut of $V(D^\star)$, we reverse the method above to obtain a directed cycle of $D$.
\end{proof}

Given a directed graph, a {\it feedback arc set} is a set of edges, the removal of which eliminates all directed cycles. We will be concerned with \emph{minimum feedback arc sets}, namely those of minimum cardinality. Proposition $\ref{prop:bij}$ establishes the following corollary of the Lucchesi-Younger theorem:

\begin{corollary}[\cite{lucchesi_minimax_1978}]
\label{cor:lucchesi}
For a planar oriented graph, the minimum size of a feedback arc set is equal to the maximum number of arc-disjoint directed cycles.
\end{corollary}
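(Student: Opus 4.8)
The plan is to transport the Lucchesi--Younger theorem through planar duality, using Proposition~\ref{prop:bij} as the bridge. Fix a planar embedding of $D$ and form the dual $D^\star$ as in the excerpt; recall that the edges of $D$ and of $D^\star$ are in a natural bijection $e \mapsto e^\star$, where $e^\star$ is the dual edge crossing $e$. The first step is to upgrade Proposition~\ref{prop:bij} from a bijection between \emph{directed cycles of $D$} and \emph{directed cuts of $D^\star$} to the sharper statement that, under $e \mapsto e^\star$, the \emph{edge set} of a directed cycle $C$ in $D$ is carried exactly onto the \emph{edge set} of the corresponding directed cut in $D^\star$. This is precisely what the proof of Proposition~\ref{prop:bij} already delivers: if $X \subset V(D^\star)$ is the set of faces of $D$ lying inside $C$, then the edges of $D^\star$ joining $X$ to $\bar X$ are exactly the duals of the edges traversed by $C$.

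Granting this refined correspondence, the two quantities in the corollary translate mechanically. A set $F \subseteq E(D)$ is a feedback arc set of $D$ iff $F$ meets the edge set of every directed cycle of $D$, which (by the refinement) holds iff $F^\star := \{\,e^\star : e \in F\,\}$ meets the edge set of every directed cut of $D^\star$, i.e.\ iff $F^\star$ is a dijoin of $D^\star$; moreover $|F| = |F^\star|$. Hence the minimum size of a feedback arc set of $D$ equals the minimum size of a dijoin of $D^\star$. In the same way, a family of directed cycles of $D$ is pairwise arc-disjoint iff the corresponding family of directed cuts of $D^\star$ is pairwise edge-disjoint, so the maximum number of arc-disjoint directed cycles of $D$ equals the maximum number of pairwise disjoint directed cuts of $D^\star$. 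Applying the Lucchesi--Younger theorem to $D^\star$ equates these two dual quantities, and the corollary follows.

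Two minor points deserve a remark rather than real work. Clean planar duality presumes $D$ is connected (and is cleanest on the sphere, so that the unbounded face plays no privileged role); if $D$ is disconnected one runs the argument on each component and sums, since both a minimum feedback arc set and a maximum arc-disjoint cycle packing decompose across components. Also, $D^\star$ need not be an \emph{oriented} graph — two faces sharing a pair of edges yield a digon in the dual — but this is immaterial, as the Lucchesi--Younger theorem holds for arbitrary directed graphs. I expect the only genuine content is the first step, pinning down that the edge bijection matches cycle-edges with cut-edges; after that the proof is pure bookkeeping.
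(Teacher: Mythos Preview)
Your proof is correct and follows essentially the same approach as the paper: apply the Lucchesi--Younger theorem to the dual $D^\star$ and use Proposition~\ref{prop:bij} to translate dijoins/disjoint directed cuts of $D^\star$ into feedback arc sets/arc-disjoint directed cycles of $D$. You are simply more explicit than the paper about the edge-level correspondence $e \mapsto e^\star$ and about the side conditions (connectedness, possible digons in $D^\star$), but the argument is the same.
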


\begin{proof}
Given a planar oriented graph $D$, by the Lucchesi-Younger theorem, the minimum cardinality of a dijoin of $D^\star$ is equal to the maximum number of disjoint directed cuts of $D^\star$. Now, by Proposition $\ref{prop:bij}$, any dijoin of $D^\star$ corresponds to a unique feedback arc set of $D$ of the same size, and any set of disjoint directed cuts of $D^\star$ corresponds to a unique set of arc-disjoint directed cycles of $D$, also of the same size. This completes the proof.
\end{proof}

We also need one well-known lemma, which follows easily from Euler's formula for planar graphs.

\begin{lemma}
\label{lem:mn}
Any planar graph $G$ with $n$ vertices and $m$ edges satisfies $m \leq 3n - 6$.
\end{lemma}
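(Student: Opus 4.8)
The plan is to run the classical Euler's formula counting argument. First I would reduce to the case where $G$ is connected and has $n \geq 3$ vertices: for $n \leq 2$ there is nothing of substance (and indeed the bound is only meaningful once $n \geq 3$, which is the only regime in which Lemma~\ref{lem:mn} will be invoked), and for a disconnected graph we may add edges between distinct components while preserving planarity, which only increases $m$, so it suffices to treat the connected case. Fix a planar embedding of $G$ and let $f$ denote the number of its faces.

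The two ingredients are then Euler's formula,
\[
n - m + f = 2,
\]
and a double count of edge--face incidences. Summing the lengths of the boundary walks over all faces counts each edge exactly twice (a bridge is traversed twice by the boundary walk of the single face it touches, and every other edge lies on the boundary of two distinct faces), so $\sum_{\text{faces } F} \ell(F) = 2m$. Here I would use that $G$ is a simple graph with $n \geq 3$: a boundary walk of length $1$ would force a loop, and one of length $2$ would force either a pair of parallel edges or a component consisting of a single edge, all of which are excluded. Hence every face boundary walk has length at least $3$, giving $2m \geq 3f$, i.e.\ $f \leq 2m/3$.

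Combining the two facts, $2 = n - m + f \leq n - m + \tfrac{2}{3}m = n - \tfrac{1}{3}m$, which rearranges to $m \leq 3n - 6$, as desired.

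I do not expect a genuine obstacle here; the only points requiring a word of care are the degenerate small cases and the disconnectedness reduction mentioned above, together with the verification that every face boundary walk in a simple graph on at least three vertices has length at least $3$ (the step that makes the inequality $2m \geq 3f$ legitimate). Everything else is a one-line rearrangement.
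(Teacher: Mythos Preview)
Your argument is the standard Euler's formula proof and is correct; the paper does not actually give a proof of this lemma, merely remarking that it ``follows easily from Euler's formula for planar graphs,'' so you have filled in precisely the details the authors had in mind.
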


We now are ready to state and prove our main theorem.

\begin{theorem}
\label{thm:main}
If $D$ is a planar digraph with digirth $g$ on $n$ vertices, then there exists an acyclic set in $G$ of size at least $n - 3n/g$. Moreover, if $g  = 4$, there exists an acyclic set of size at least $5n/12$, and if $g = 5$, there exists an acyclic set of size at least $7n/15$.
\end{theorem}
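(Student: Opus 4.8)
The plan is to reduce the whole statement to an upper bound on the size of a minimum feedback arc set of $D$, obtained by combining Corollary \ref{cor:lucchesi} with Lemma \ref{lem:mn}. Fix a minimum feedback arc set $F \subseteq E(D)$ and let $H$ be the planar graph on vertex set $V(D)$ whose edges are exactly the arcs of $F$, viewed as a simple graph (legitimate since $D$ is oriented). The key bridge is a dictionary between independent sets of $H$ and acyclic sets of $D$: if $S$ is independent in $H$, then $D[S]$ contains no arc of $F$, and since $F$ meets every directed cycle of $D$, the subdigraph $D[S]$ is acyclic. Hence $D$ has an acyclic set of size at least $\alpha(H)$, so it suffices to bound $\alpha(H)$ from below, which in turn requires an upper bound on $|E(H)| = |F|$.

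That upper bound is where Corollary \ref{cor:lucchesi} enters: $|F|$ equals the maximum number of arc-disjoint directed cycles in $D$. As $D$ has digirth $g$, such a family of $|F|$ arc-disjoint cycles uses at least $g|F|$ distinct arcs, so $g|F| \le |E(D)| \le 3n - 6$ by Lemma \ref{lem:mn}, giving $|F| \le (3n-6)/g$. Using only the trivial estimate $\alpha(H) \ge n - |E(H)|$, this already produces an acyclic set of size at least $n - (3n-6)/g \ge n - 3n/g$, proving the first part of Theorem \ref{thm:main}.

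For the sharper bounds when $g = 4$ or $g = 5$, the trivial estimate $\alpha(H) \ge n - |E(H)|$ must be upgraded to one that uses planarity: \emph{every planar graph on $N$ vertices with $M$ edges has an independent set of size at least $(2N - M)/3$}. Given this, $\alpha(H) \ge (2n - (3n-6)/g)/3$, which equals $(5n+6)/12 \ge 5n/12$ when $g = 4$ and $(7n+6)/15 \ge 7n/15$ when $g = 5$. I would prove the planar independence lemma by induction on the number of vertices: if the graph has a vertex $v$ of degree at most $2$, delete $v$ (when $\deg v = 2$, removing its two incident edges keeps the target inequality exactly on the nose) or instead use $\alpha(G) \ge \alpha(G - N[v]) + 1$ (when $\deg v \le 1$, which leaves slack); if the minimum degree is at least $3$, then $M \ge 3N/2$, so $(2N - M)/3 \le N/6$, and the bound follows from the elementary fact that planar graphs are $6$-colorable.

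The only step that requires genuine care — as opposed to bookkeeping — is the planar independence lemma and the verification that its constant produces exactly the ratios $5/12$ and $7/15$: one must track the edge count precisely through the induction so that removing a degree-$2$ vertex loses exactly two edges, and should check against the extremal examples (vertex-disjoint unions of triangles and single edges) to see that $5n/12$ and $7n/15$ are the best this approach yields. A minor point worth confirming is the inequality $g|F| \le |E(D)|$, which is immediate because the $|F|$ directed cycles furnished by Corollary \ref{cor:lucchesi} are pairwise arc-disjoint and each has length at least $g$.
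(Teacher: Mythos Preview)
Your proof is correct and follows the same global strategy as the paper: bound $|F|$ via Corollary~\ref{cor:lucchesi} and Lemma~\ref{lem:mn}, then find a large independent set in the graph $H$ of feedback arcs, which is automatically acyclic in $D$. For $g=4,5$ both arguments reduce to the same inequality $\alpha(H)\ge(2n-|F|)/3$ (equivalently, a vertex cover of $F$ of size at most $(n+|F|)/3$), which yields $2n/3-n/g$ after substituting $|F|\le 3n/g$.

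The only real difference is how that inequality is established. The paper runs a greedy vertex-cover argument---repeatedly delete a vertex incident to at least two feedback arcs while the current edge count exceeds half the current vertex count, then cover the remaining matching one edge at a time---and this argument makes no use of the planarity of $H$. Indeed the bound $\alpha(H)\ge(2N-M)/3$ holds for \emph{every} graph: if some vertex has degree $\ge 2$, delete it and induct; otherwise $H$ is a matching plus isolated vertices and $\alpha(H)=N-M\ge(2N-M)/3$ since $2M\le N$. Your inductive proof instead splits on minimum degree and invokes $6$-colorability when $\delta(H)\ge 3$, so it leans on the planarity of $H$ where none is actually needed. Both routes are valid and give identical constants; the paper's is slightly leaner in its hypotheses.
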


\begin{proof}
A \emph{vertex cover} of a given edge set $E$ is a set $S$ of vertices for which every edge in $E$ is incident to some vertex in $S$. Observe that if $S$ is a vertex cover of a feedback arc set in a digraph $D$, then the complement of $S$ in $D$ is an acyclic set. This is because every directed cycle must include an element of the feedback arc set.

Given a planar oriented graph $D$ of digirth $g$, let $H$ be a collection of arc-disjoint directed cycles, each of which must have length at least $g$. Thus, by Lemma \ref{lem:mn} the number of cycles in $H$ is at most $e(D)/g \leq 3n/g$, where $e(D)$ denotes the number of edges of $D$. Corollary \ref{cor:lucchesi} implies that there exists a feedback arc set with cardinality at most $3n/g$. Removing a vertex cover of this feedback arc set, we are left with an acyclic set of at least $n - 3n/g$ vertices.

For the cases $g = 4,5$, we now derive a better bound by using the greedy algorithm to obtain a smaller vertex cover of the feedback arc set. This is possible because $3n/g > n/2$, meaning that some vertices are incident to 2 or more feedback arcs. Suppose that our feedback arc set consists initially of $f$ edges, where $f\le 3n/g$.

Let $d$ equal the number of feedback arcs minus half the number of vertices. Thus, initially $d=f-n/2$.  At each step, we remove the vertex $v$ that is incident to the most feedback arcs, together with all feedback arcs incident to $v$. As long as $d>0$, each step removes one vertex and at least two feedback arcs. Such a step decreases $d$ by at least $3/2$. Let $m$ be the number of steps taken before $d\le 0$, and let $d'\le 0$ be the final value of $d$. We conclude that $$m\le \frac{f-n/2-d'}{3/2}=\frac{2f-n-2d'}{3}.$$

After $m$ steps, the number of vertices remaining is $n-m$, so the number of feedback arcs remaining is $d'+(n-m)/2$. We remove one vertex from each of these feedback arcs so that the vertices remaining at the end form an acyclic set. The total number of vertices removed is

\begin{align*}
m+d'+\frac{n-m}{2}&=\frac{n}{2}+d'+\frac{m}{2}\\ &\le \frac{n}{2}+d'+\frac{1}{2}\cdot \frac{2f-n-2d'}{3}\\ &=\frac{n+f+2d'}{3}\\ &\le \frac{n+f}{3}\\ &\le \frac{n}{3}+\frac{n}{g}.
\end{align*}

The number of vertices remaining in our acyclic set is thus at least$$\frac{2n}{3}-\frac{n}{g}.$$

This is equal to $5n/12$ for $g=4$ and $7n/15$ for $g=5$.

\end{proof}

\section{Further directions}
\label{sec:counter}
In this section we describe methods that might be used to strengthen our main result, and difficulties that arise.

\subsection{Acyclic 2-colorings}
Harutyunyan and Mohar \cite{harut_planar_2014} asked whether there is a simple proof of the fact that planar oriented graphs of large digirth are acyclically 2-colorable. We showed above that we can find a very large acyclic set in a planar digraph of large digirth. It is natural to ask whether, given a minimum feedback arc set as in Theorem \ref{thm:main}, we can choose a vertex cover $Y$ of the feedback arc set such that $Y$ induces an acyclic subgraph. This would imply that planar digraphs of large digirth are acyclically 2-colorable, because the subgraph induced by $V(D) \backslash Y$ contains no feedback arcs and hence is acyclic.  We now show that there are planar graphs for which no such $Y$ can be found.

\begin{proposition}
\label{prop:bigraph}
There exists an oriented planar graph $D$ such that, for any minimum feedback arc set $F$ of $D$, and for any vertex cover $Y$ of $F$, there is a directed cycle in the subgraph induced by $Y$.
\end{proposition}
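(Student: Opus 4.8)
The plan is to exhibit one explicit planar oriented graph $D$ and verify the statement by hand. The design principle is to force every minimum feedback arc set of $D$ to be a \emph{matching} (a set of edges, no two of which share a vertex), so that a vertex cover of such a set must contain at least one endpoint of each of its edges; one then tailors the cycle structure of $D$ so that each of these ``transversal'' vertex sets induces a directed cycle.

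Concretely, I would proceed in three steps. (1) Fix a candidate $D$ and pin down the common size $\tau$ of its minimum feedback arc sets using Corollary \ref{cor:lucchesi}: exhibit $\tau$ pairwise arc-disjoint directed cycles (a lower bound, and in the construction these will be packed tightly, using most of the edges) together with a matching of size $\tau$ meeting every directed cycle (the upper bound). (2) Enumerate all minimum feedback arc sets of $D$. The directed cycles of $D$ will be arranged so that any set of $\tau$ edges meeting all of them is highly constrained; the key check is that no such set can contain two edges incident to a common vertex, so every minimum feedback arc set is a matching and hence has vertex cover number exactly $\tau$. (3) Given a minimum feedback arc set $F = \{e_1,\dots,e_\tau\}$, a minimal vertex cover picks exactly one endpoint of each $e_i$, and one must verify that each of the $2^{\tau}$ resulting vertex sets induces a directed cycle in $D$ --- in our example, a short directed cycle through some of the chosen endpoints. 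Since every vertex cover of $F$ contains a minimal one and a larger induced subgraph can only acquire more directed cycles, this finishes the proof.

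The hard part is step (3) in the presence of planarity. Requiring that \emph{every} transversal of $F$ span a directed cycle pushes $D$ to contain a great many directed cycles supported on a small vertex set, which collides with the bound $m \le 3n - 6$ of Lemma \ref{lem:mn}: if all these witnessing cycles were triangles, the vertices covered by $F$ would be forced to span so many triangles that the bound is violated (for a three-edge matching $F$, the six covered vertices would have to induce $K_6$). The way around this is to let some of the witnessing cycles have length greater than three and to reuse edges across different cycles, so that the whole configuration still embeds in the plane; finding such a configuration is the real content of the construction. A secondary difficulty is completing step (2): one must rule out ``stray'' minimum feedback arc sets that are not matchings --- and would therefore admit a vertex cover of size one or two --- which the rigidity of the arc-disjoint cycle packing from step (1) is meant to preclude.
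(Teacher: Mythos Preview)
Your proposal is a plan, not a proof: you never exhibit the graph $D$, and the obstruction you flag in step~(3) is left unresolved. More to the point, your design principle---forcing every minimum feedback arc set to be a \emph{matching}---is exactly the opposite of what makes the paper's construction work, and it is what manufactures the $K_6$-type difficulty you then struggle with.

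In the paper's example the unique minimum feedback arc set $F$ has six arcs, three of which are the sides $ab$, $bc$, $ac$ of a single triangle; the remaining three ($ed$, $gf$, $hi$) are pairwise disjoint and peripheral. Because $\{ab,bc,ac\}$ is \emph{not} a matching, any vertex cover $Y$ of $F$ must already contain at least two of $a,b,c$---say $a$ and $b$ without loss of generality. Covering the arc $ed$ then forces either $d$ or $e$ into $Y$, and the graph is arranged so that both $a\to b\to d\to a$ and $a\to b\to e\to a$ are directed $3$-cycles. One binary case split replaces your $2^{\tau}$ transversal check, and only four auxiliary edges ($bd$, $da$, $be$, $ea$) are needed rather than a $K_6$-like configuration.

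So the idea you are missing is to make $F$ deliberately non-matching: put the three sides of a triangle into $F$ so that two of its vertices are pinned in every cover, and then attach one further feedback arc each of whose endpoints closes a directed triangle with those two pinned vertices. The rest of the paper's gadgetry (the pendants around $ed$, $gf$, $hi$ and the multiple internally disjoint $2$-paths between pairs in $\{a,b,c\}$) is there only to force uniqueness of $F$ and to ensure that the arcs $ab$, $bc$, $ac$ genuinely must lie in any minimum feedback arc set.
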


\begin{proof}
We show that the digraph $D$ in Figure \ref{fig:bigraph} satisfies the necessary conditions. The unique minimum feedback arc set  $\{ab, bc, ac, ed, gf, hi\}$ is indicated with dashed lines. Notice that any vertex cover of the minimum feedback arc set must contain at least 2 vertices from the central triangle $abc$, suppose $a$ and $b$ without loss of generality. We must now choose either $d$ or $e$ to cover edge $de$. However, if we choose $d$, then $abd$ is a directed 3-cycle, and if we choose $e$, then $abe$ is a directed 3-cycle. This completes the proof.
\end{proof}

%
%


\begin{figure}
\SetVertexNormal[Shape      = circle,
                 FillColor  = white,
                 LineWidth  = .5pt]
\SetUpEdge[lw         = 1.3pt,
           color      = black,
           labelcolor = white,
           labeltext  = red,
           labelstyle = {sloped,draw,text=blue}]
\begin{center}
\begin{tikzpicture}
\tikzset{VertexStyle/.style = {shape = circle,fill = pink,minimum size = 2pt,inner sep = 2.5pt}}

   \Vertex[x=-1.5 ,y=0]{a}
   \Vertex[x=1.5,y=0]{b}
   \Vertex[x=0,y=2.6]{c}
   \Vertex[x=-1.5,y=-4]{d}
   \Vertex[x=1.5,y=-4]{e}
   \Vertex[x=4.96,y=2]{f}
   \Vertex[x=3.4,y=4.6]{g}
   \Vertex[x=-3.4,y=4.6]{h}
   \Vertex[x=-4.96,y=2]{i}
   \Vertex[x=0,y=-.6]{j}
   \Vertex[x=1.27,y=1.6]{k}
   \Vertex[x=-1.27,y=1.6]{l}
   \Vertex[x=0,y=-1.5]{m}
   \Vertex[x=2.49,y=2.05]{n}
   \Vertex[x=-2.49,y=2.05]{o}
   \Vertex[x=0,y=-4.6]{p}
   \Vertex[x=0,y=-5.5]{q}
   \Vertex[x=4.73,y=3.6]{r}
   \Vertex[x=5.96,y=4.05]{s}
  \Vertex[x=-4.73,y=3.6]{t}
   \Vertex[x=-5.96,y=4.05]{u}

   \tikzset{EdgeStyle/.style = {->}}

   \Edge(b)(j)
   \Edge(j)(a)
   \Edge(c)(l)
   \Edge(l)(a)
   \Edge(c)(k)
   \Edge(k)(b)
   \Edge(c)(h)
   \Edge(i)(a)
   \Edge(c)(g)
   \Edge(f)(b)
   \Edge(b)(e)
   \Edge(d)(a)
   \Edge(b)(m)
   \Edge(m)(a)
   \Edge(c)(n)
   \Edge(n)(b)
   \Edge(c)(o)
   \Edge(o)(a)
   \Edge(f)(r)
   \Edge(r)(g)
   \Edge(f)(s)
   \Edge(s)(g)
   \Edge(d)(p)
   \Edge(p)(e)
   \Edge(d)(q)
   \Edge(q)(e)
   \Edge(i)(t)
   \Edge(t)(h)
   \Edge(i)(u)
   \Edge(u)(h)

   \tikzset{EdgeStyle/.append style = {bend left}}
   \Edge(e)(a)
   \Edge(b)(d)
   \Edge(c)(f)
   \Edge(g)(b)

   \tikzset{EdgeStyle/.append style = {bend right}}
   \Edge(h)(a)
   \Edge(c)(i)

\SetUpEdge[lw         = 2.5pt,
           color      = black,
           labelcolor = white,
           labeltext  = red,
           labelstyle = {sloped,draw,text=blue}]
   \tikzset{EdgeStyle/.style = {dashed,->}}
   \Edge(a)(b)
   \Edge(b)(c)
   \Edge(a)(c)
   \Edge(e)(d)
   \Edge(g)(f)
   \Edge(h)(i)

\end{tikzpicture}
\end{center}
\caption{The planar oriented graph $D$. The unique minimum feedback arc set of $D$ is shown in dashed lines. Note that the $D$ is planar since edge $ea$ can be relocated around $abeqd$, and similarly for $gb$ and $ci$.}
\label{fig:bigraph}
\end{figure}
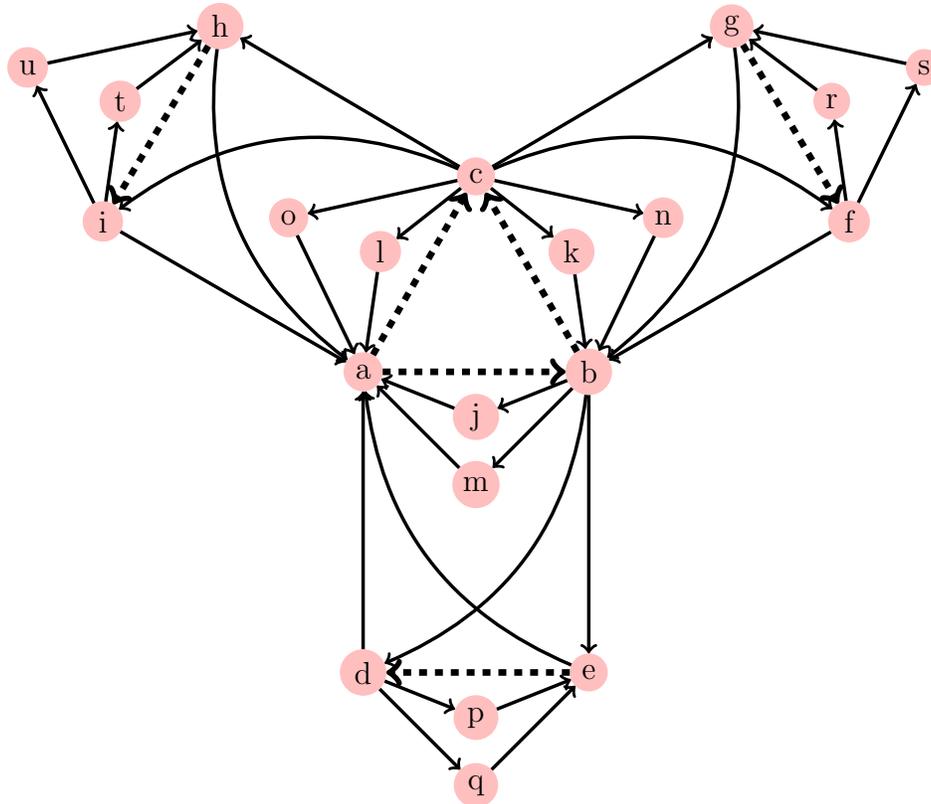

\subsection{Improving bounds for small $g$}

For small $g$, the bound we obtain in Theorem \ref{thm:main} on the maximum acyclic set of an oriented graph on $n$ vertices is less than $n/2$, and it is natural to ask whether we can improve upon this bound. In Propositions \ref{prop:triangle} and \ref{prop:square}, we now present two classes of planar oriented graphs, of digirth 3 and 4, respectively, for which the minimum size of a set of feedback arcs is significantly greater than $n/2$. Therefore, removing one vertex from each feedback arc yields an acyclic set of size less than $n/2$. Although some feedback arcs may overlap in certain vertices, as we demonstrate in the proof of Theorem \ref{thm:main}, applying the greedy algorithm still does not show the existence of an acyclic set of size at least $n/2$. Hence
these classes of graphs suggest that the methods used in Theorem \ref{thm:main} are unlikely to prove significantly stronger bounds for digirth 3 and 4.

Given a digraph $D$, let $f(D)$ denote the size of a minimum set of feedback arcs.

\begin{proposition}
\label{prop:triangle}
There exists an infinite family of digraphs $\mathcal{D}^3$, such that for all $D\in \mathcal{D}^3$:
\begin{itemize}
\item $D$ is planar, with digirth 3.
\item $f(D)=|V(D)|-2$.
\end{itemize}
\end{proposition}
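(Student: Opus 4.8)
The plan is to build the family $\mathcal{D}^3$ explicitly by stacking many "tight" triangle-gadgets so that every vertex is forced to lie on a large number of arc-disjoint directed 3-cycles, and to argue that the feedback arc set is essentially forced to be as large as possible. First I would take as a building block a directed triangle $xyz$ together with several internally-disjoint directed paths of length $2$ from one vertex to another (as in the central triangle $abc$ of Figure \ref{fig:bigraph}, where $a\to b$ is shadowed by $a\to j\to b$, $a\to m\to b$, etc.). Each such length-$2$ detour, together with the direct arc it parallels, creates a directed $3$-cycle, and all of these $3$-cycles through a fixed arc are pairwise arc-disjoint except for that one arc. I would then chain these gadgets: the idea is to take a long "necklace" or a triangulation-like planar construction in which, roughly, each added vertex forces one more arc into any feedback arc set. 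Concretely, I expect $\mathcal{D}^3$ to be a sequence of graphs $D_k$ with $|V(D_k)| = k + c$ for a small constant $c$, built by repeatedly attaching a new triangle gadget sharing an edge (or a vertex) with the previous one.

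The core of the argument has two halves, matching the two bullet points. Planarity and digirth $3$ are immediate from the construction: all gadgets are planar, they are glued along edges or vertices in a planar fashion (exactly the kind of local relocation remarked on in the caption of Figure \ref{fig:bigraph}), and the shortest directed cycle is a triangle by design, with no directed $2$-cycles since we stay within oriented graphs. For the equality $f(D) = |V(D)| - 2$, I would prove the two inequalities separately. The upper bound $f(D) \le |V(D)| - 2$ should come from exhibiting an explicit acyclic set of size $2$: order the vertices so that all but two of them can be deleted one at a time, each deletion killing all remaining directed cycles through it — equivalently, show the underlying "cycle hypergraph" has a vertex cover of size $|V(D)|-2$ by a greedy peeling argument, or directly write down a topological-order-respecting arc set. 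The lower bound $f(D) \ge |V(D)| - 2$ is the substantive half: by Corollary \ref{cor:lucchesi} it suffices to exhibit $|V(D)| - 2$ arc-disjoint directed cycles, so I would design the gadgets precisely so that one can greedily pull out that many arc-disjoint triangles — each new vertex of the construction contributes a fresh triangle using two "private" arcs not used by any previously chosen triangle.

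The main obstacle I anticipate is getting the lower bound tight rather than merely $\Omega(|V(D)|)$: arranging the planar gadget so that the arc-disjoint directed cycles can be found with essentially no waste, i.e. so that the number of arc-disjoint triangles is exactly $|V(D)| - 2$ and not, say, $|V(D)|/2$. This requires each vertex to be "charged" to its own triangle in a way that is globally consistent and compatible with planarity; the parallel length-$2$ paths in the $abc$ gadget of Figure \ref{fig:bigraph} are the key device, since a vertex $j$ on the path $a\to j\to b$ participates in the triangle $a\to j\to b\to a$ using the two arcs $aj, jb$ that no other triangle needs. I would therefore make $D_k$ consist of a single base edge $uv$ together with $k$ internally-disjoint directed $u$–$v$ paths of length $2$ (plus the arcs $uv$ and $vu$... no — to keep it an oriented graph, one direct arc $u\to v$ and the return via an outer path), so that the $k$ interior vertices give $k$ arc-disjoint triangles and the explicit peeling of those $k$ interior vertices leaves the acyclic pair $\{u,v\}$; verifying that this $D_k$ is planar with digirth exactly $3$ and that no feedback arc set can do better than $k = |V(D_k)| - 2$ will complete the proof.
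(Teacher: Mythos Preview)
Your proposal has a genuine gap in the concrete construction. In the graph you describe at the end --- vertices $u,v,w_1,\dots,w_k$ with paths $u\to w_i\to v$ and a single return route from $v$ to $u$ --- every directed cycle must use that return route. Hence all your triangles $u\to w_i\to v\to u$ share a common arc, the maximum number of arc-disjoint directed cycles is $1$, and by Corollary~\ref{cor:lucchesi} the minimum feedback arc set has size $1$, not $|V(D)|-2$. The ``private arcs $aj,jb$'' observation is correct but insufficient: the third arc of each triangle is not private, and that is exactly what kills arc-disjointness. Your upper-bound sketch also conflates feedback \emph{arc} sets with feedback \emph{vertex} sets: exhibiting an acyclic set of two vertices, or a vertex cover of the cycle hypergraph, says nothing directly about how few \emph{edges} suffice to break all cycles.

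The paper's construction avoids this by using an octahedral recursion rather than a fan. Start with a directed $3$-cycle $D_0$; to pass from $D_i$ to $D_{i+1}$, pick a triangular face $abc$ and insert three new vertices $d,e,f$ forming an inner directed triangle, with six spoke arcs so that the three ``side'' triangles $aef$, $bfd$, $cde$ are each directed. These three triangles partition the nine new arcs and are therefore arc-disjoint from one another and from all old cycles, so $f(D_{i+1})\ge f(D_i)+3$; an explicit feedback arc set of that size gives equality. Since $|V|$ also increases by $3$ and the base case has $|V(D_0)|=3$, $f(D_0)=1$, induction yields $f(D_i)=|V(D_i)|-2$. The point you were missing is that every new vertex must bring \emph{three} private arcs (not two) if each is to anchor its own arc-disjoint triangle; the octahedral insertion does exactly this.
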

\begin{proof}
We define $D_i\in \mathcal{D}^3$ recursively. Let $D_0$ be a directed 3-cycle, and for $i\ge 0$, construct $D_{i+1}$ as follows from a planar embedding of $D_i$. Pick some face $abc$ of $D_i$ that forms a directed 3-cycle. Construct vertices $d,e,f$ within $abc$, with edges as shown in Figure \ref{fig:triangle}.

Observe that $|V(D_{i+1})|=|V(D_i)|+3$ and $f(D_{i+1})=f(D_i)+3$. Since $|V(D_0)|=3$ and $f(D_0)=1$, it follows by induction that $f(D_i)=|V(D_i)|-2$ for every $i$.  By construction, $D_i$ is planar and has digirth 3.
\end{proof}

\begin{figure}
\SetVertexNormal[Shape      = circle,
                 FillColor  = white,
                 LineWidth  = .5pt]
\SetUpEdge[lw         = 1.3pt,
           color      = black,
           labelcolor = white,
           labeltext  = red,
           labelstyle = {sloped,draw,text=blue}]
\begin{center}
\begin{tikzpicture}
\tikzset{VertexStyle/.style = {shape = circle,fill = pink,minimum size = 2pt,inner sep = 2.5pt}}

   \Vertex[x=0,y=4]{a}
   \Vertex[x=3.464,y=-2]{b}
   \Vertex[x=-3.464,y=-2]{c}
   \Vertex[x=0,y=-1]{d}
   \Vertex[x=-0.866,y=0.5]{e}
   \Vertex[x=0.866,y=0.5]{f}

 \tikzset{EdgeStyle/.style = {->}}

   \Edge(a)(b)
   \Edge(b)(c)
   \Edge(c)(a)
   \Edge(f)(a)
   \Edge(a)(e)
   \Edge(d)(b)
   \Edge(b)(f)
   \Edge(e)(c)
   \Edge(c)(d)
   \Edge(d)(e)
   \Edge(e)(f)
   \Edge(f)(d)
   
   \end{tikzpicture}
\end{center}
\caption{Directed octahedral pattern for generating graphs of digirth 3 with large minimum feedback set.}
\label{fig:triangle}
\end{figure}
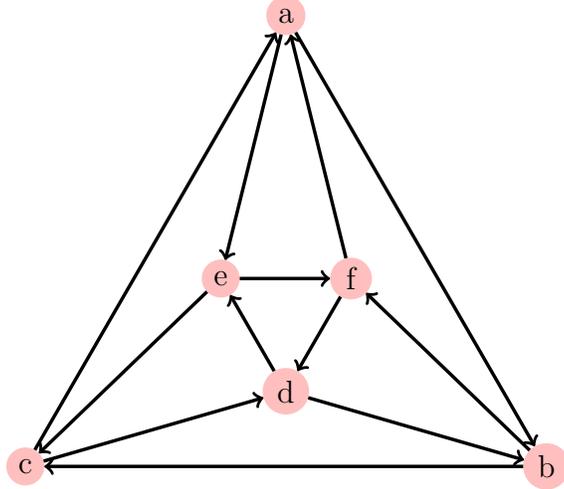


\begin{proposition}
\label{prop:square}
There exists an infinite family of digraphs $\mathcal{D}^4$, such that for all $D\in \mathcal{D}^4$:
\begin{itemize}
\item $D$ is planar, with digirth 4.
\item $f(D)=\frac{5}{8}|V(D)|-\frac{3}{2}$.
\end{itemize}
\end{proposition}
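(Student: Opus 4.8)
The plan is to imitate the recursive construction behind Proposition~\ref{prop:triangle}, replacing the octahedral pattern by a digirth-$4$ pattern calibrated to the ratio $5/8$.

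\textbf{Fixing the increments.} In a recursion of this type each step adds a fixed number $p$ of vertices and raises $f$ by a fixed number $q$, so in order to keep the relation $f(D)=\tfrac58|V(D)|-\tfrac32$ one needs $q/p=5/8$; the smallest admissible choice is $p=8,\ q=5$. The additive constant $-\tfrac32$ is then pinned down by the base case: a single directed $4$-cycle $D_0$ has $|V(D_0)|=4$ and $f(D_0)=1=\tfrac58\cdot4-\tfrac32$. So the goal is a chain $D_0\subset D_1\subset D_2\subset\cdots$ in which $D_{i+1}$ is obtained from $D_i$ by choosing a face that is a directed $4$-cycle $a\to b\to c\to d\to a$ and embedding in its interior a fixed gadget $G$ on $8$ new vertices, attached to $a,b,c,d$ by a fixed pattern of arcs chosen so that (i) $D_{i+1}$ stays planar; (ii) no directed $2$- or $3$-cycle is created (for instance by making $G$ two-colorable compatibly with the bipartition $\{a,c\},\{b,d\}$ of the boundary cycle); (iii) the interior of $G$ still contains a directed $4$-face, so the recursion can continue; and (iv) $f(D_{i+1})=f(D_i)+5$. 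Granting all this, $|V(D_i)|=4+8i$ and $f(D_i)=1+5i=\tfrac58|V(D_i)|-\tfrac32$ by induction, and $\mathcal D^4=\{D_i\}_{i\ge0}$ is the desired infinite family.

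\textbf{Proving $f(D_{i+1})=f(D_i)+5$.} Since $D_i$ is a subgraph of $D_{i+1}$, one has $f(D_{i+1})\ge f(D_i)$ for free, and the content is the matching upper and lower bounds on the increment. For the upper bound I would fix a minimum feedback arc set $F_i$ of $D_i$ --- it meets the boundary cycle $a\to b\to c\to d\to a$, being a feedback arc set --- and exhibit an explicit set of $5$ arcs of $G$ whose union with $F_i$ meets every directed cycle of $D_{i+1}$: a directed cycle either stays inside $D_i$, where $F_i$ kills it, or visits a vertex of $G$, in which case --- using that such a cycle enters and leaves $G$ only through $a,b,c,d$, and that $G$ minus the chosen arcs has only very restricted directed connectivity among those four vertices --- the $5$ arcs together with $F_i$ kill it. For the lower bound I would use Corollary~\ref{cor:lucchesi}: take a maximum family of arc-disjoint directed cycles of $D_i$ and enlarge it by $5$ arc-disjoint directed $4$-cycles supported inside $G$ and using only the new arcs, so that arc-disjointness from the old family is automatic; if needed one first removes the boundary cycle from the old family and redistributes its arcs among cycles of $D_{i+1}$. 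For the two halves to meet, $G$ must be ``tight'' in the sense of Corollary~\ref{cor:lucchesi} applied to $G$ alone: its minimum internal feedback arc number and its maximum internal arc-disjoint $4$-cycle packing must both equal $5$.

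\textbf{Main obstacle.} The real work is the existence of such a gadget: a planar graph on $8$ vertices with a distinguished directed $4$-face boundary, carrying a $2$- and $3$-cycle-free orientation for which simultaneously $5$ arcs suffice to destroy all of its directed cycles (and all cycles leaking into the rest of $D_i$) and $5$ arc-disjoint directed $4$-cycles can be packed inside it, with a directed $4$-face left over for recursion. These requirements push in opposite directions, so the gadget must be essentially extremal; finding one and verifying it is a finite case analysis --- aided by Corollary~\ref{cor:lucchesi} on the gadget itself --- and is the crux, just as the octahedral pattern of Figure~\ref{fig:triangle} was the crux of Proposition~\ref{prop:triangle}. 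Once $G$ is in hand, planarity and digirth of each $D_i$, the base case, and the arithmetic of the induction are all routine.
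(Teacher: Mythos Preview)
Your strategy is exactly the paper's: a recursive construction starting from a directed $4$-cycle, inserting a fixed $8$-vertex gadget into a directed $4$-face at each step, with the relation $f=\tfrac58|V|-\tfrac32$ following by induction from the increments $|V(D_{i+1})|-|V(D_i)|=8$ and $f(D_{i+1})-f(D_i)=5$. The only gap --- which you yourself flag as the crux --- is that you never actually exhibit the gadget; without it the argument is a plan, not a proof.

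The paper's gadget is the directed cuboctahedron. Inside the face $a\to b\to c\to d\to a$ one places ``edge-midpoint'' vertices $e,f,g,h$ with $a\to e\to b$, $b\to f\to c$, $c\to g\to d$, $d\to h\to a$, and an inner square $i\to l\to k\to j\to i$ linked to the middle layer by $e\to i$, $i\to h$, $h\to l$, $l\to g$, $g\to k$, $k\to f$, $f\to j$, $j\to e$. The six square faces --- the outer $abcd$, the inner $ilkj$, and the four side faces $aeih$, $bfje$, $cgkf$, $dhlg$ --- are all directed $4$-cycles and \emph{partition} the $24$ arcs; the five non-outer ones use only new arcs, which gives the lower bound $f(D_{i+1})\ge f(D_i)+5$ via Corollary~\ref{cor:lucchesi} precisely as you outlined, with no need to disturb the boundary cycle in the old packing. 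None of the eight triangular faces is cyclically oriented, so digirth $4$ is preserved --- but note that the gadget is \emph{not} bipartite, so your proposed bipartiteness shortcut for ruling out $3$-cycles would have led you away from it. Incidentally, the paper simply asserts $f(D_{i+1})=f(D_i)+5$ without justification, so your sketched verification already goes beyond what it supplies.
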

\begin{proof}
As with $\mathcal{D}^3$, we define $D_i\in \mathcal{D}^4$ recursively. Let $D_0$ be a directed 4-cycle, and for $i\ge 0$, construct $D_{i+1}$ as follows from a planar embedding of $D_i$. Pick some face $abcd$ of $D_i$ that forms a directed 4-cycle. Construct vertices $e,f,g,h,i,j,k,l$ within $abcd$, with edges as shown in Figure \ref{fig:square}.

Observe that $|V(D_{i+1})|=|V(D_i)|+8$ and $f(D_{i+1})=f(D_i)+5$. Since $|V(D_0)|=4$ and $f(D_0)=1$, it follows by induction that $f(D_i)=\frac{5}{8}|V(D_i)|-\frac{3}{2}$ for every $i$.  By construction, $D_i$ is planar and has digirth 4.
\end{proof}

\begin{figure}
\SetVertexNormal[Shape      = circle,
                 FillColor  = white,
                 LineWidth  = .5pt]
\SetUpEdge[lw         = 1.3pt,
           color      = black,
           labelcolor = white,
           labeltext  = red,
           labelstyle = {sloped,draw,text=blue}]
\begin{center}
\begin{tikzpicture}
\tikzset{VertexStyle/.style = {shape = circle,fill = pink,minimum size = 2pt,inner sep = 2.5pt}}

   \Vertex[x=-4,y=4]{a}
   \Vertex[x=4,y=4]{b}
   \Vertex[x=4,y=-4]{c}
   \Vertex[x=-4,y=-4]{d}
   \Vertex[x=0,y=3]{e}
   \Vertex[x=3,y=0]{f}
   \Vertex[x=0,y=-3]{g}
   \Vertex[x=-3,y=0]{h}
   \Vertex[x=-1,y=1]{i}
   \Vertex[x=1,y=1]{j}
   \Vertex[x=1,y=-1]{k}
   \Vertex[x=-1,y=-1]{l}
   
 \tikzset{EdgeStyle/.style = {->}}

   \Edge(a)(b)
   \Edge(b)(c)
   \Edge(c)(d)
   \Edge(d)(a)
   \Edge(a)(e)
   \Edge(e)(b)
   \Edge(b)(f)
   \Edge(f)(c)
   \Edge(c)(g)
   \Edge(g)(d)
   \Edge(d)(h)
   \Edge(h)(a)
   \Edge(e)(i)
   \Edge(i)(h)
   \Edge(h)(l)
   \Edge(l)(g)
   \Edge(g)(k)
   \Edge(k)(f)
   \Edge(f)(j)
   \Edge(j)(e)
   \Edge(j)(i)
   \Edge(k)(j)
   \Edge(l)(k)
   \Edge(i)(l)
   
   \end{tikzpicture}
\end{center}
\caption{Directed cuboctahedral pattern for generating graphs of digirth 4 with large minimum feedback set.}
\label{fig:square}
\end{figure}
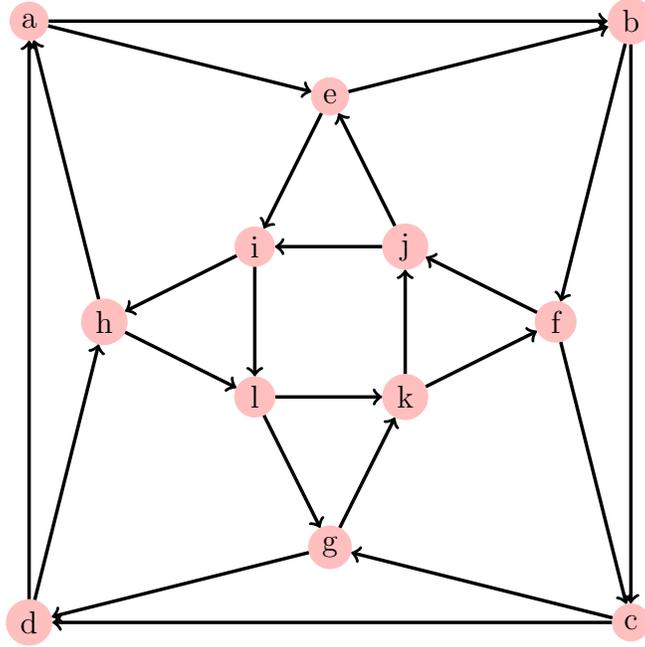

We have defined a class of digraphs $\mathcal{D}^3$ by recursive addition of an octahedral pattern, and a class $\mathcal{D}^4$ by recursive addition of a cuboctahedral pattern. It is possible to define a similar class $\mathcal{D}^5$, containing planar digraphs of digirth 5, by recursive addition of an icosidodecahedral pattern. However, the resulting relation on $f(D)$ falls short of $|V(D)|/2$:$$f(D)=\frac{11}{25}|V(D)|-\frac{6}{5}.$$Thus, for $D\in \mathcal{D}^5$, removing a vertex from each of a minimum set of feedback arcs yields an acyclic set of size greater than $|V(D)|/2$. We cannot say whether the methods of Theorem \ref{thm:main} may hold for general planar graphs of digirth 5.






\section{Acknowledgements}
The authors would like to thank Jacob Fox for calling our attention to this problem and Tanya Khovanova for helpful discussions and review. We would also like to thank the MIT Department of Mathematics, the Center for Excellence in Education, and the Research Science Institute for their support of this research.
\bibliographystyle{plain}
\bibliography{biblio.bib}

\end{document}